\numberwithin{equation}{section}
\newtheorem{theorem}{Theorem}[section]
\newtheorem{lemma}[theorem]{Lemma}
\newtheorem{proposition}[theorem]{Proposition}
\theoremstyle{definition}
\newtheorem{definition}[theorem]{Definition}
\newtheorem{example}[theorem]{Example}
\newtheorem{remark}[theorem]{Remark}
\newcommand\hra{\hookrightarrow}
\newcommand{\cO}{\mathscr{O}}
\newcommand{\B}{\mathbb{B}}
\newcommand{\C}{\mathbb{C}}
\newcommand{\D}{\mathbb{D}}
\newcommand{\N}{\mathbb{N}}
\newcommand{\Q}{\mathbb{Q}}
\newcommand{\Z}{\mathbb{Z}}
\newcommand{\R}{\mathbb{R}}
\newcommand\wt{\widetilde}
\newcommand{\Aut}{\mathop{{\rm Aut}}}
\newcommand\dist{\mathrm{dist}}
\def\bs{\backslash}
\begin{document}
\title[Proper holomorphic embeddings into Stein manifolds]
{Proper holomorphic embeddings into Stein manifolds with the density property}
\author{Rafael Andrist} 
\author{Franc Forstneri\v{c}}
\author{Tyson Ritter}
\author{Erlend Forn\ae ss Wold}
\address{R. Andrist, Bergische Universit\"at Wuppertal,
Fachbereich C - Mathematik und Naturwissenschaften, Gausstr.\ 20, D-42119 Wuppertal}
\email{rafael.andrist@math.uni-wuppertal.de}
\address{F.\ Forstneri\v c, Institute of Mathematics, Physics and Mechanics, 
University of Ljubljana, Jadranska 19, 1000 Ljubljana, Slovenia}
\email{franc.forstneric@fmf.uni-lj.si}
\address{T.\ Ritter, School of Mathematical Sciences, University of Adelaide, Adelaide SA 5005, Australia}
\email{tyson.ritter@adelaide.edu.au}
\address{E.\ F. Wold, Matematisk Institutt, Universitetet i Oslo,
Postboks 1053 Blindern, 0316 Oslo, Norway}
\email{erlendfw@math.uio.no}
%
%    General info
%
\subjclass[2010]{Primary 32E10, 32E20, 32E30, 32H02.  Secondary 32Q99.}
\keywords{Oka principle, Stein manifold, density property, Oka manifold, Lie group, holomorphic convexity}
\date{\today}

\begin{abstract}
We prove that a Stein manifold of dimension $d$ admits a proper holomorphic embedding into any Stein manifold of dimension at least $2d+1$ satisfying the holomorphic density property. This generalizes classical theorems of Remmert, Bishop and Narasimhan, pertaining to embeddings into complex Euclidean spaces, as well as several other recent results.
\end{abstract}
\maketitle

%
%
%   SECTION 1: INTRODUCTION
%
%
\section{Introduction}
\label{sec:Intro}
A complex manifold $X$ is said to satisfy the {\em density property} 
if the Lie algebra generated by all the $\C$-complete holomorphic vector fields on $X$ is dense in the Lie algebra of all holomorphic vector fields on $X$ in the compact-open topology. (See Varolin \cite{Varolin2001,Varolin2000}
or \cite[\S 4.10]{Forstneric2011}.) This condition  trivially holds on compact manifolds
where every vector field is complete, but is fairly restrictive on noncompact manifolds. It is especially interesting on Stein manifolds where it implies the Anders\'en-Lempert-Forstneri\v c-Rosay theorem on approximation of isotopies of injective holomorphic maps of Runge domains by holomorphic automorphisms. (See \cite{Andersen,Andersen-Lempert, Forstneric-Rosay} for the case $X=\C^n$ and Theorem 4.10.6 in \cite[p.\ 132]{Forstneric2011} or \cite[Appendix]{Ritter} for the general case.) Similarly one defines the {\em volume density property} of a Stein manifold $X$, endowed with a holomorphic volume form $\omega$, by considering the Lie algebra of all holomorphic vector fields on $X$ with vanishing $\omega$-divergence; their flows can be approximated by $\omega$-preserving automorphisms of $X$.

In this paper we prove the following main result.

\begin{theorem}
\label{th:main}
Let $X$ be a Stein manifold satisfying the density property or the volume density property. If $S$ is a Stein manifold and $2\dim S +1\le \dim X$, then any continuous map $f\colon S\to X$ is homotopic to a proper holomorphic embedding $F\colon S\hra X$. If in addition $K$ is a compact $\cO(S)$-convex set in $S$ 
such that $f$ is holomorphic on a neighborhood of $K$, and $S'$ is a closed complex subvariety of $S$
such that the restricted map $f|_{S'} \colon S'\hra X$ is a proper holomorphic embedding of $S'$ to $X$, 
then $F$ can be chosen to agree with $f$ on $S'$ and to approximate $f$ uniformly on $K$ as closely as desired.  
\end{theorem}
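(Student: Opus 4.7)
The plan is to argue by induction on an exhaustion of $S$, combining three ingredients each guaranteed by the hypotheses: the Oka principle for maps into $X$ (valid because the density or volume density property of $X$ implies that $X$ is an Oka manifold); the Anders\'en-Lempert-Forstneri\v{c}-Rosay approximation theorem for $X$ (the content of the density or volume density property); and a general-position argument that uses the dimension condition $2\dim S + 1 \le \dim X$ to turn holomorphic maps into embeddings. As a preliminary step, I would apply the Oka principle to replace the continuous map $f$ by a holomorphic map $f_0\colon S\to X$ that agrees with $f$ on $S'$, approximates $f$ uniformly on $K$, and is homotopic to $f$; general position then allows us to assume that $f_0$ is a holomorphic embedding on a neighborhood of any prescribed compact $\cO(S)$-convex subset.

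The bulk of the argument is an inductive pushing-out scheme. Choose normal exhaustions $K = K_0 \subset K_1 \subset K_2 \subset \cdots$ of $S$ by $\cO(S)$-convex compact sets with $\bigcup_j K_j = S$, and $L_1 \subset L_2 \subset \cdots$ of $X$ by compact sets. At step $j$, assume that a holomorphic map $F_{j-1}\colon S\to X$ has been constructed equalling $f$ on $S'$, uniformly close to $f$ on $K$, a holomorphic embedding on a neighborhood of $K_{j-1}$, and satisfying $F_{j-1}(K_{j-1}\setminus K_{j-2})\cap L_{j-2} = \emptyset$. The inductive step produces $F_j$ satisfying the analogous conditions with $j$ in place of $j-1$, uniformly so close to $F_{j-1}$ on $K_{j-1}$ that the summed errors give a holomorphic limit $F\colon S\to X$; this limit is automatically a proper embedding interpolating $f$ on $S'$ and approximating $f$ on $K$.

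To carry out the inductive step, I would first build an isotopy of holomorphic embeddings of a neighborhood of $K_j$ into $X$, starting at $F_{j-1}$ and ending at a map that sends $K_j\setminus K_{j-1}$ outside $L_j$; the dimension gap $2\dim S + 1 \le \dim X$ ensures, by general position, that such an isotopy can be arranged to be disjoint from the closed complex submanifold $F_{j-1}(S')$ outside $S'$ itself. I would then approximate this isotopy on $F_{j-1}(K_j)$ by an honest holomorphic automorphism $\Phi_j$ of $X$ via the Anders\'en-Lempert theorem, and set $F_j = \Phi_j \circ F_{j-1}$ on a Runge subdomain containing $K_j$, patched holomorphically with $F_{j-1}$ elsewhere.

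The main technical obstacle is the interpolation condition: the automorphism $\Phi_j$ must fix $F_{j-1}(S')$ pointwise in order to preserve $F|_{S'}=f|_{S'}$, while simultaneously being close to the identity on $L_{j-1}$ and pushing $F_{j-1}(K_j\setminus K_{j-1})$ off $L_j$. This requires a \emph{relative} Anders\'en-Lempert theorem for the pair $(X, F_{j-1}(S'))$: approximation of isotopies by automorphisms whose generating $\C$-complete holomorphic vector fields vanish along $F_{j-1}(S')$. Establishing such a relative density statement, by extending complete holomorphic vector fields defined on $X\setminus F_{j-1}(S')$ to vector fields on all of $X$ vanishing along $F_{j-1}(S')$ via a Cartan-type gluing argument, is the crux of the proof. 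In the volume density case an analogous relative result must be proved for $\omega$-divergence-free vector fields, with the same general strategy.
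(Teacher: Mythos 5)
Your high-level blueprint --- Oka principle to make the initial map holomorphic, inductive pushing out along an exhaustion, Anders\'en--Lempert to push the newly added shell off the compact $L_j$, general position to get embeddings --- is essentially the blueprint the paper follows, and Remark~\ref{rem:anotherapproach} in the paper even notes that a more automorphism-centric strategy \`a la Forstneri\v c--Ritter can work. But there are two genuine gaps in what you wrote, one technical and one strategic.

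First, the Anders\'en--Lempert--Forstneri\v c--Rosay theorem approximates isotopies of \emph{biholomorphic maps between $n$-dimensional Runge open subsets of $X$} by automorphisms of $X$; it does not directly apply to an ``isotopy of holomorphic embeddings of a neighborhood of $K_j$ into $X$'' as you propose, since $F_{j-1}(K_j)$ is only $d$-dimensional. You must thicken the isotopy of the submanifold to an isotopy of full-dimensional Runge domains in $X$. This requires a holomorphically trivial tubular neighborhood of the relevant piece of $F_{j-1}(S)$ in $X$, which is only guaranteed (via Oka--Grauert) over a contractible base; that is precisely why the paper decomposes the noncritical extension into small convex bumps and applies the main lemma one bump at a time, taking the tube $\Omega=F(U\times\D^{n-d})$ over the convex attaching set $C=A\cap B$. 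Your global ``shell'' $K_j\setminus K_{j-1}$ admits no such trivialization in general, so the isotopy you want to feed into Anders\'en--Lempert does not come for free.

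Second, and more seriously, the ``relative Anders\'en--Lempert theorem for $(X,F_{j-1}(S'))$'' you identify as the crux is both unavailable and unnecessary. It is unavailable because the density property is a property of $X$ alone; a relative density property for vector fields vanishing along a prescribed submanifold is a much stronger statement, and your proposed route --- extending complete vector fields on $X\setminus F_{j-1}(S')$ to $X$ by a Cartan-type gluing --- does not work, since Cartan gluing of vector fields destroys completeness (it only gives a decomposition of a field as a sum of fields, with no control on their flows). It is unnecessary because the interpolation condition is handled in the paper without any automorphism fixing $S'$: the Anders\'en--Lempert step is applied only locally, to produce a map $G$ defined over a bump that is chosen disjoint from $S'$, and $G$ is then glued to the previous map (which still equals $f$ on $S'$) by the Cartan splitting lemma of \cite{FF:Acta}. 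Since the gluing modifies the map only near the bump, interpolation on $S'$ is preserved automatically. Your framework, which replaces $F_{j-1}$ globally by $\Phi_j\circ F_{j-1}$, forces the relative problem on you, whereas the bump-wise gluing sidesteps it entirely.

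A smaller omission: you do not address how to cross critical levels of a plurisubharmonic exhaustion of $S$. This is not a formality; it requires attaching a totally real disc, using Mergelyan approximation to extend holomorphically across it, and a topological argument (vanishing of $\pi_l(X,X\setminus L)$ for $l<n$, coming from the Morse indices of $-\sigma$) to push the disc off $L$ before doing so. Without this step the inductive scheme only works as long as $\rho$ has no critical points above the starting level.
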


Here $\cO(S)$ denotes the algebra of all holomorphic functions on a complex manifold $S$, endowed with the compact-open topology. A compact set $K$ in $S$ is said to be {\em $\cO(S)$-convex} if for every point $x\in S\setminus K$ there exists a $g\in \cO(S)$ with $|g(x)| > \sup_K |g|$. 

Since a Stein manifold $X$ with the density property is an {\em  Oka manifold} (see \cite{Kaliman-Kut2008} or Theorem 5.5.18 in \cite[p.\ 206]{Forstneric2011}), it follows that every continuous map $S\to X$ from a Stein manifold $S$ is homotopic to a holomorphic map (see Theorem 5.4.4 in \cite[p.\ 193]{Forstneric2011}). Furthermore, the jet transversality theorem \cite[\S 7.8]{Forstneric2011} shows that a generic holomorphic map $S\to X$ is an injective immersion when $2\dim S +1\le \dim X$. 
The main new point of Theorem \ref{th:main} is that it gives {\em proper} holomorphic embeddings. This is a nontrivial addition since the usual Oka property for holomorphic maps need not imply the corresponding Oka property for {\em proper} holomorphic maps (see \cite[Example 1.3]{DF2010}). We do not know whether Theorem \ref{th:main} holds for every Stein Oka manifold $X$. 

In the special case when $S$ is a relatively compact, smoothly bounded, strongly pseudoconvex domain in another Stein manifold $\wt S$, Theorem \ref{th:main} holds  (except perhaps the interpolation condition) for every Stein manifold $X$ without assuming the density property; see Drinovec Drnov\v sek and Forstneri\v c \cite{DF2007,DF2010}. 

We actually prove the following more precise result, showing in particular that a proper holomorphic embedding $F\colon S\hra X$ in Theorem \ref{th:main} can be chosen to avoid a given compact holomorphically convex subset $L$ of the target manifold $X$. Comparable results for mappings to $X=\C^n$ were obtained by Forstneri\v c and Ritter in \cite{ForstnericRitter}. Theorem \ref{th:main2} is proved in \S \ref{sec:proof} after we prepare the necessary tools in \S  \ref{sec:mainlemma}.

\begin{theorem}
\label{th:main2}
Assume that $X$ is a Stein manifold of dimension $n$ satisfying the density property or the volume density property, $L$ is a compact $\cO(X)$-convex set in $X$, $S$ is a Stein manifold of dimension $d$ with $2d+1\le n$, $S'$ is a closed complex subvariety of $S$, $K$ is a compact $\cO(S)$-convex subset of $S$, and $f\colon S\to X$ is a continuous map such that 
\begin{itemize}
\item[\rm (a)]  $f$ is holomorphic on a neighborhood of $K$,
\item[\rm (b)]  the restriction $f|_{S'}\colon S'\to X$ is a proper holomorphic embedding, and 
\item[\rm (c)]  $f(S\setminus \mathring K) \subset X\setminus L$.
\end{itemize}
Then there exist a proper holomorphic embedding $F\colon S\hookrightarrow X$ and a homotopy of continuous maps $f_t\colon S \to X$ $(t\in [0,1])$, with $f_0=f$ and $f_1=F$, satisfying the following conditions for every $t\in [0,1]$:
\begin{itemize}
\item $f_t$ is holomorphic on a neighborhood of $K$ and is uniformly close to $f$ on $K$, 
\item $f_t|_{S'}=f|_{S'}$, and 
\item $f_t(S\setminus \mathring K)\subset X\setminus L$. 
\end{itemize}
\end{theorem}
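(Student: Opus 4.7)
The strategy is a standard exhaustion-induction, building $F$ as the limit of continuous maps $f_n\colon S\to X$ with $f_0=f$, where each $f_n$ is holomorphic and a holomorphic embedding on a neighborhood of a compact set $K_n$ from a normal $\cO(S)$-convex exhaustion $K=K_0\subset K_1\subset\cdots$ of $S$. Simultaneously $X$ is exhausted by $\cO(X)$-convex compacta $L=L_0\subset L_1\subset\cdots$, and the maps are required to satisfy
\[
 f_n|_{S'}=f|_{S'},\qquad \|f_n-f_{n-1}\|_{K_{n-1}}<\varepsilon_n,\qquad f_n(K_n\setminus\mathring K_{n-1})\subset X\setminus L_{n-1}
\]
for a fast decreasing sequence $\varepsilon_n$. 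Under these conditions the limit $F=\lim f_n$ is holomorphic, remains an embedding on each $K_j$ (injectivity and immersion are open conditions stable under sufficiently good approximation), and is proper because $F^{-1}(L_{n-1})\subset K_n$. The homotopy $f_t$ is built from straight-line homotopies $f_n\rightsquigarrow f_{n+1}$, damped by continuous cut-offs outside $K_{n+1}$.

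The inductive step has two parts. In the \emph{Oka step}, I start from $f_n$ and use that $X$ is Oka (density property implies Oka by Kaliman--Kutzschebauch) to apply the Oka principle with approximation on $K_n$ and interpolation on $S'$, giving a holomorphic map $g$ on a neighborhood of $K_{n+1}$ that agrees with $f$ on $S'$ and approximates $f_n$ on $K_n$; a relative jet-transversality argument, in the dimension range $2d+1\le n$, then makes $g$ a holomorphic embedding. In the \emph{pushing-out step}, the main lemma of \S\ref{sec:mainlemma}, which is where the density (or volume density) property enters, yields a holomorphic automorphism $\Phi$ of $X$ --- produced via the Anders\'en--Lempert theorem --- close to the identity on a compact $\cO(X)$-convex set containing $g(K_n)\cup L_n$ and satisfying $\Phi(g(K_{n+1}\setminus\mathring K_n))\subset X\setminus L_n$, compatibly with the interpolation on $S'$. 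The new map $f_{n+1}$ is obtained by patching $\Phi\circ g$ near $K_{n+1}$ with $f_n$ elsewhere via a continuous cut-off.

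The main obstacle is the pushing-out step and, more precisely, its interaction with the fixed subvariety $S'$. The Anders\'en--Lempert machinery offers flexibility only through global automorphisms of $X$, so moving parts of the image outside $L_n$ without disturbing the embedded copy $f(S')\subset X$ is delicate: one must construct an isotopy of injective holomorphic maps defined on a Runge neighborhood of $g(K_n)\cup(K_{n+1}\setminus\mathring K_n)$ whose image avoids $f(S')$ during the push, which is possible precisely because of the codimension hypothesis $2d+1\le n$. Verifying the necessary $\cO(X)$-convexity of the hulls used at each step and, in the volume density case, arranging the isotopy to be generated by divergence-free holomorphic vector fields are the other technical points; any small residual error on $S'$ can be absorbed by a final Oka interpolation correction.
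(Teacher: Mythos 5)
Your overall architecture (exhaustion-induction on $S$ and $X$, pushing the image of $K_{j}\setminus\mathring K_{j-1}$ outside $L_{j-1}$ to force properness, stabilizing injectivity/immersion via a nested sequence of $\varepsilon_j$) matches the paper, and you correctly identify Andersén--Lempert as the place where the density property enters. But your inductive step has two genuine gaps where it departs from the paper's actual mechanism.

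First, your pushing-out step post-composes $g$ by a \emph{global} automorphism $\Phi$ of $X$. This necessarily destroys the interpolation on $S'$: even though $g=f$ on $S'$, the patched map $\Phi\circ g$ near $K_{n+1}$ no longer equals $f$ on $S'\cap K_{n+1}$, because $\Phi$ can only be made \emph{approximately}, not exactly, the identity on the (noncompact) set $f(S')$. Dismissing this as a ``residual error absorbed by a final Oka correction'' does not close the gap: such a correction could re-enter $L_n$ or lose the embedding property, and in any case would have to be made at every stage before passing to the limit. The paper avoids this problem entirely: in Lemma~\ref{lem:main} the Andersén--Lempert automorphism $\Psi$ is not applied to the whole map, but is used only to manufacture the \emph{local} map $G=\Psi^{-1}\circ F\circ\phi_1$ on the convex bump $B$; the new map $\wt F$ is then obtained by \emph{gluing} $F$ near $A$ and $G$ near $B$ with the gluing lemma of \cite{FF:Acta}, which supports exact interpolation on a subvariety, and the bump decomposition is arranged so that no bump meets $S'$. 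This is why the paper can keep the map literally unchanged on $S'$. Relatedly, the specific convexity bookkeeping of Definition~\ref{def:CP}, Lemma~\ref{lem:main}(b) (that $K\cup C$ be $\cO(A)$-convex) and Proposition~\ref{prop:noncritical}'s decomposition into convex bumps is what makes the Andersén--Lempert hypotheses (a Runge isotopy of injective holomorphic maps) actually hold; your sketch assumes the relevant sets like $g(K_{n+1})\cup L_n$ are holomorphically convex without arranging for it.

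Second, you do not address the critical points of the exhaustion function $\rho$ on $S$. The noncritical extension is the content of Proposition~\ref{prop:noncritical}, but passing a critical level requires attaching a totally real handle $E$ of Morse index $k$, deforming $f|_{E'}$ continuously into $X\setminus L_j$ using the vanishing of $\pi_\ell(X,X\setminus L_j)$ for $\ell<n$ (because $X\setminus\mathring L_j$ is built from handles of index $\ge n$), Mergelyan approximation on $D_1\cup E'$, and only then reducing to the noncritical case. An ``Oka step followed by a push-out automorphism'' does not cover this: once $g$ is produced by the Oka principle, parts of $g(K_{n+1}\setminus\mathring K_n)$ may lie in $L_n$ in a topologically essential way, and there is no a priori isotopy of injective holomorphic maps (hence no $\Phi$) that drags them out while approximately fixing $L_n$. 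The relative homotopy argument, performed \emph{before} holomorphic approximation, is the missing ingredient that rules out this obstruction.
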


Applying Theorem \ref{th:main2} to maps $S\to X$ whose images do not intersect the compact subset $L\subset X$, we see that the complement $X\setminus L$ of any compact $\cO(X)$-convex set in a Stein manifold $X$ with the density property enjoys the {\em basic Oka property with approximation} 
for maps from Stein manifolds $S$ satisfying $2\dim S+1\le \dim X$. 

Condition (c) on the initial map $f$ in Theorem \ref{th:main2} can be replaced by the weaker condition that 
$f(bK \cup (S'\setminus K)) \subset X\setminus L$. By topological reasons, a map $f$ satisfying the latter 
condition can be deformed to a map satisfying condition (c) by a homotopy  that is fixed on a neighborhood 
of $K$ and on $S'$ (see  \cite[Remark 2]{ForstnericRitter}).

Theorems \ref{th:main} and \ref{th:main2} generalize classical results of Remmert \cite{Remmert}, Bishop \cite{Bishop1961}, and Narasimhan \cite{Narasimhan1960}, concerning the existence of a proper holomorphic embedding $S \hra\C^n$ of any Stein manifold $S$ with $2\dim S +1\le n$. The corresponding result on interpolation of embeddings into $\C^n$ on 
closed complex subvarities of $S$ was proved by Acquistapace, Broglia and Tognoli \cite{Acquistapace}. A new proof and generalizations of these classical theorems of complex analysis were given recently by Forstneri\v c and Ritter in \cite{ForstnericRitter}. 

In the special case when $\dim S=1$, i.e., $S$ is an open Riemann surface, the existence of a proper holomorphic embedding in Theorem \ref{th:main} was proved recently by Andrist and Wold \cite{AndristWold}. They also constructed proper holomorphic immersions $S\to X$ when $\dim X=2$. Here we adapt their construction to the case $\dim S>1$. Our proof follows the general strategy used in Oka theory (see e.g\ Chapter 5 in \cite{Forstneric2011}), but with nontrivial additions to ensure that we obtain {\em proper} holomorphic embeddings. The main technical step in the proof is to extend a holomorphic map (by approximation) across a convex bump $B$, attached to a strongly pseudoconvex domain $A$ in a Stein manifold $S$, so that $B$ is mapped into the complement $X\setminus L$ of a given compact $\cO(X)$-convex set $L$  (see Lemma \ref{lem:main}). The latter property is used to obtain a proper limit map. It is not known whether such complements $X\setminus L$ are Oka manifolds when $X$ is Oka; this is an open problem even for the complement of a ball in $\C^n$.
Instead we use tools from the {\em Anders\'en-Lempert theory} concerning the approximation of certain isotopies of biholomorphic maps of Runge domains in $X$ by holomorphic automorphisms of $X$. 
It seems likely that the method developed in \cite{ForstnericRitter} could also be used to extend a holomorphic map across a convex bump; see Remark \ref{rem:anotherapproach} below. 

At this time we do not see how to prove the analogous result for proper holomorphic maps $S\to X$ when $\dim S<\dim X$ but $2\dim S+1>\dim X$ (cf.\ \cite{ForstnericRitter} for the case $X=\C^n$). The reason is that our method requires that the holomorphic map in an inductive step is an injective immersion on the attaching set $A\cap B$ of the bump $B$. We appeal to a general position argument to achieve this, which necessitates that $2\dim S+1\le \dim X$. 

A much stronger result is known for embeddings to $\C^n$: Every Stein manifold of dimension $d>1$ admits a proper holomorphic embedding into $\C^n$ with $n=\left[\frac{3d}{2}\right] +1$ (Eliashberg and Gromov \cite{Eliashberg-Gromov1992}, Sch\"urmann \cite{Schurmann}.) We do not know whether this result can be generalized to the target manifolds considered in this paper.

%
%
%   EXAMPLES
%
%
\begin{example}
We illustrate the scope of Theorems \ref{th:main} and \ref{th:main2} by collecting known examples of Stein manifolds with the  density property or the volume density property. 
\begin{itemize}
\item The complex Euclidean space $\C^n$ for any $n \geq 1$ satisfies the volume density property with respect to the volume form $dz_1 \wedge \cdots \wedge dz_n$ (see \cite{Andersen}).
\item $\C^n$ for any $n \geq 2$ satisfies the density property  (see \cite{Andersen-Lempert}).
\item The Stein manifold $(\C^*)^n$ for $n \geq 1$ satisfies the volume density property with respect to the volume form $\frac{dz_1}{z_1}\wedge\cdots\wedge \frac{dz_n}{z_n}$. (Here $\C^* =\C\setminus\{0\}$.) It is not known whether $(\C^*)^n$ satisfies the density property when $n>1$.
\item For any Stein Lie group $G$ with an invariant Haar form $\omega$, $G \times \C$ has the density property, and the volume density property with respect to $\omega \wedge dz$ (see \cite{Varolin2001}).
\item If $G$ is a linear algebraic group and $H \subset G$ is a closed proper reductive subgroup, then the homogeneous space $X=G/H$ is a Stein manifold with the density property, except when $X = \C, (\C^\ast)^n$, or a $\Q$-homology plane with fundamental group $\Z_2$ (see \cite[Theorem 6]{Donzelli}).
\item In particular, a linear algebraic group with connected components different from $\C$ or $(\C^\ast)^n$ has the density property (see \cite[Theorem 3]{Kaliman-Kut2008}).
\item If $p: \C^n \to \C$ is a holomorphic function with smooth reduced zero fibre, then the Stein manifold $X=\{(x,y,z)\in \C \times \C \times \C^n  \colon x y = p(z)\}$ 
has the density property (see \cite{Kaliman-Kut2008-1}).
\item A Cartesian product $X_1 \times X_2$ of two Stein manifolds $X_1, X_2$ with the density property also has the density property. A product $X_1 \times X_2$ of two Stein manifolds $(X_1, \omega_1), (X_2, \omega_2)$ with the volume density property satisfies the volume density property with respect to the volume form $\omega_1 \wedge \omega_2$ 
(see \cite[Theorem 1]{Kaliman-Kut2011}).
\item Given a reductive subgroup $R$ of any linear algebraic group $G$ 
such that the homogeneous space $X=G/R$ has a left invariant volume form 
$\omega$, the space $(X,\omega)$ has the volume density property
(see \cite[Corollary 6.2]{KK2012}).
\end{itemize}
\end{example}

%
%
%  THE MAIN LEMMA
%
%
\section{The main lemma}
\label{sec:mainlemma}
In this section we develop the key  analytic  ingredients that will be used in the proof of Theorem \ref{th:main}, namely Lemma \ref{lem:main} and Proposition \ref{prop:noncritical}.  

Recall that a compact set $K$ in a complex manifold $S$  is said to be a {\em Stein compact} if $K$ admits a basis of open Stein neighborhoods in $S$. If $K\subset A$ are compacts in $S$, we say that $K$ is $\cO(A)$-convex if there is an open set $U\subset S$ containing $A$ such that $K$ is $\cO(U)$-convex. (The latter notion was defined just below the statement of Theorem \ref{th:main}.) 

We recall the notion of a (special) Cartan pair and of a convex bump, adjusting slightly Definition 5.10.2 in \cite[p.\ 218]{Forstneric2011}. (See also Fig.\ 5.2 in \cite[p.\ 219]{Forstneric2011}.)

\begin{definition}
\label{def:CP}
A pair of compact sets $(A,B)$ in a complex manifold $S$ is said to be a {\em Cartan pair} if 
\begin{itemize}
\item[\rm(i)]  the sets $A$, $B$, $D:=A\cup B$ and $C:=A\cap B$ are Stein compacta, and
\item[\rm(ii)]  $A,B$ are {\em separated} in the sense that
$\overline{A\bs B}\cap \overline{B\bs A} =\emptyset$.
\end{itemize}
A Cartan pair $(A,B)$ is said to be {\em special}, and $B$ is said to be a {\em convex bump on $A$}, if in addition the following properties hold:
\begin{itemize}
\item[\rm (iii)] $A$ and $D=A\cup B$ are compact strongly pseudoconvex domains, and 
\item[\rm (iv)] there is a holomorphic coordinate system on a neighborhood of $B$ in $S$ 
in which the sets $B$ and $C=A\cap B$ are strongly convex.
\end{itemize}
\end{definition}

In the cited definition in \cite[p.\ 218]{Forstneric2011} the sets $B$ and $C$ were not explicitly required to be convex along $bC\cap \mathring A = bB\cap \mathring A$ (it was merely asked that in local coordinates near $B$ the boundaries of $A$ and $A\cup B$ be strongly convex). This addition is trivially satisfied in the context of that definition by a suitable choice of 
the set $B$.

\begin{lemma}
\label{lem:main}
Assume that $S$ is a Stein manifold of dimension $d$ and $X$ is a Stein manifold of dimension $n$, where $2d+1\le n$. Let $D=A\cup B$ be a compact strongly pseudoconvex domain in $S$ such that $(A,B)$ is a special Cartan pair, and $B$ is a compact convex bump attached to $A$ along the convex set $C=A \cap B$ (see Def.\ \ref{def:CP}). Assume that 
\begin{itemize}
\item[\rm (a)] $L$ is a compact $\cO(X)$-convex set in $X$, 
\item[\rm (b)] $K$ is a compact set contained in $\mathring A\setminus C$ such that $K\cup C$ is $\cO(A)$-convex,  
\item[\rm (c)] $W\subset S$ is an open set containing $A$, and 
\item[\rm (d)] $f\colon W\hra X$ is an injective holomorphic immersion such that $f^{-1}(L)\subset \mathring K$.
\end{itemize}
If $X$ has the density property or the volume density property, then it is possible to approximate $f$ as closely as desired, uniformly on $A$, by an injective holomorphic immersion $\tilde f\colon \wt W\to X$ on a neighborhood $\wt W$ of 
$D=A\cup B$ such that $\tilde f^{-1}(L)\subset \mathring K$. 
\end{lemma}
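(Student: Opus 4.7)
My plan is to construct $\tilde f$ in two successive stages: first, produce a preliminary holomorphic extension $g$ of $f$ across the bump $B$; second, post-compose $g$ with a holomorphic automorphism of $X$, supplied by the Anders\'en--Lempert theory, that pushes $g(B)$ off $L$ while remaining uniformly close to the identity on a neighbourhood of $f(A)$. The driving observation is that $f^{-1}(L)\subset \mathring K$ together with $K\cap C=\emptyset$ forces $f(C)\cap L=\emptyset$, so after the extension step the compact set $g(B)\cap L$ will be supported away from $g(C)$ and can plausibly be displaced without disturbing $g(A)\approx f(A)$.

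For the extension step I would exploit that $B$ and $C=A\cap B$ are strongly convex in a local holomorphic chart on $S$. Since $X$ is Oka (as a Stein manifold with the density property, by \cite{Kaliman-Kut2008}), the Convex Approximation Property of $X$ produces a holomorphic map $g_0$ on a neighbourhood of $B$ that uniformly approximates $f$ on a neighbourhood of $C$; a Cartan-type splitting argument in the spirit of Theorem 5.4.4 of \cite{Forstneric2011} then glues $f|_A$ and $g_0|_B$ into a single holomorphic map $g$ on a neighbourhood $W'$ of $D=A\cup B$, uniformly close to $f$ on $A$. A small generic perturbation through a dominating spray, combined with the holomorphic jet transversality theorem \cite[\S 7.8]{Forstneric2011} and the hypothesis $2d+1\le n$, ensures that $g$ is an injective immersion on a slightly smaller neighbourhood of $D$. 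Uniform closeness of $g$ to $f$ on $A$, together with $f^{-1}(L)\subset\mathring K$, automatically yields $g^{-1}(L)\cap A\subset \mathring K$; however $g(B)$ may still meet $L$, and removing this intersection is the heart of the lemma.

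For the second stage I would build a smooth isotopy $\psi_t\colon U_0\to U_t$ ($t\in[0,1]$) of biholomorphisms between open Stein Runge subdomains of $X$, with $U_0\supset f(A)\cup g(B)$, such that $\psi_0=\mathrm{id}$, $\psi_t$ is uniformly close to the identity on a neighbourhood of $f(A)$ for every $t$, and $\psi_1(g(B))\cap L=\emptyset$; in the volume density case the isotopy is chosen to preserve the given volume form. Since $g(C)$ is disjoint from $L$, the compact set $g(B)\cap L$ is contained in $g(B\setminus N)$ for a suitable neighbourhood $N$ of $C$, so the isotopy only has to displace this distant portion of $g(B)$. The combination of $\dim_\C g(B)=d\le n-d-1$ and the $\cO(X)$-convexity of $L$ is what provides the topological and Runge-holomorphic room to construct $\psi_t$, using a Runge approximation argument inside a Stein Runge neighbourhood of $g(B\setminus N)\cup L$ in $X$ together with a general position transversality argument. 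Once $\psi_t$ is in place, the Anders\'en--Lempert--Forstneri\v c--Rosay theorem for the (volume) density property (Theorem 4.10.6 of \cite{Forstneric2011}, see also \cite[Appendix]{Ritter}) approximates $\psi_1$ uniformly on a Runge compact containing $f(A)\cup g(B)$ by a holomorphic automorphism $\Phi$ of $X$. Setting $\tilde f:=\Phi\circ g$ on a suitably shrunken neighbourhood $\wt W$ of $D$ yields a holomorphic injective immersion that is uniformly close to $f$ on $A$, avoids $L$ on all of $B$, and still satisfies $\tilde f^{-1}(L)\cap A\subset\mathring K$, hence $\tilde f^{-1}(L)\subset\mathring K$. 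The main obstacle, and where both the dimensional assumption $2d+1\le n$ and the $\cO(X)$-convexity of $L$ are used essentially, is the construction of the isotopy $\psi_t$ with simultaneous control over closeness to the identity near $f(A)$, avoidance of $L$ by $g(B)$, and the Stein-Runge property of the intermediate domains $U_t$, all of which are required to invoke Anders\'en--Lempert theory.
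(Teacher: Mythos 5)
Your two–stage plan (first extend $f$ across $B$ arbitrarily, then post-compose with an automorphism that pushes $g(B)$ off $L$) is genuinely different from the paper's proof, and unfortunately the second stage, which you correctly identify as the heart of the lemma, is asserted rather than established. The existence of an isotopy $\psi_t$ of biholomorphisms between Stein Runge subdomains of $X$ with $\psi_0=\mathrm{id}$, $\psi_t$ uniformly close to the identity near $f(A)$, and $\psi_1(g(B))\cap L=\emptyset$ is precisely the kind of statement one would have to prove, and it is not at all clear how to do so: you offer no construction of the domain $U_0$, no reason why $\psi_t(U_0)$ remains Runge for all $t$, and no mechanism producing $\psi_1$ other than ``a Runge approximation argument together with a general position transversality argument.'' The difficulty is that the image $g(D)$ of a general holomorphic extension is just a compact set in $X$ with no useful holomorphic convexity relation to $L$, so the Runge hypotheses of the Anders\'en--Lempert--Forstneri\v c--Rosay theorem cannot be verified. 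The paper circumvents this by a crucial preliminary normalization that your argument omits: it first uses \cite[Theorem 1.1]{DF2010} to replace $f$ by a \emph{proper} holomorphic embedding $g\colon W_0\hra X$ of a Runge Stein neighborhood $W_0$ of $A$, so that $\Sigma=g(W_0)$ is a closed complex submanifold; this makes $g(K\cup C)$ $\cO(\Sigma)$-convex and, by Lemma 6.5 of \cite{Forstneric1999}, makes $L\cup g(K\cup C)$ $\cO(X)$-convex. That convexity is what lets one find the compact $P$ with $L'\cup P$ $\cO(X)$-convex, and thence the Runge pair $\Omega\cup\Omega'$ on which the isotopy is defined.

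The second conceptual difference is that the paper never pushes $g(B)$ off $L$. Instead it takes a tubular neighborhood map $F\colon W\times\D^{n-d}\to X$ with $F(\cdot,0)=f$, a linear contraction $r_t$ of a convex chart $V\supset B$ into a convex chart $U\supset C$, and sets $\phi_t=(r_t,\mathrm{id})$ on the tube; the isotopy $\psi_t=F\circ\phi_t\circ F^{-1}$ is defined on $\Omega=F(U\times\D^{n-d})\subset X\setminus L'$ and is extended by the identity on a disjoint Stein neighborhood $\Omega'\supset L'$. After approximating $\psi_1$ by $\Psi\in\Aut(X)$, the extension across $B$ is taken to be $G=\Psi^{-1}\circ F\circ\phi_1$ on $V\times\D^{n-d}$: this is defined over all of $V$ because $\phi_1$ contracts $V$ into $U$, it automatically lands in $X\setminus L'$ because $F\circ\phi_1(V\times\D^{n-d})\subset\Omega$ and $\Psi^{-1}\approx\mathrm{id}$ near $L'$, and it is close to $F$ on $U\times\D^{n-d}$ because $\Psi\approx\psi_1$ there. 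The final map $\tilde f$ is then the zero section of the glued spray $\wt F$ of $F$ and $G$ (via \cite[Theorem 4.1]{FF:Acta}), and its closeness to $f$ on $A$ comes from the gluing estimate, not from the automorphism being close to the identity near $f(A)$ as in your scheme. Finally, your treatment of the volume density case (``choose the isotopy to preserve the volume form'') hides a genuine issue: the conjugated isotopy must preserve $F^*\omega$, and one then needs a cohomology vanishing $H^{n-1}(\cdot,\C)=0$ on the relevant domain to approximate by divergence-free fields, which is why the paper works on a contractible tube rather than a generic Stein Runge neighborhood of $g(B\setminus N)\cup L$.
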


\begin{proof}
We first consider the case when $X$ satisfies the density property; the necessary changes in the case of volume density property will be explained at the end. 

Replacing the Stein manifold $S$ with a suitable Stein neighborhood of the compact strongly pseudoconvex domain $D=A\cup B$, we may assume that the sets $A$ and $D$ are $\cO(S)$-convex. It then follows from condition (b) in the lemma 
that the sets $C$ and $K\cup  C$ are also $\cO(S)$-convex. 

Pick a smoothly bounded strongly pseudoconvex Runge domain $W_0$ in $S$ such that $A\subset W_0 \Subset W$ and $A$ is $\cO(W_0)$-convex.  By Theorem 1.1 in \cite{DF2010} we can approximate $f$ uniformly on $A$ by a proper holomorphic embedding $g\colon W_0\hra X$ such that $g^{-1}(L) \subset \mathring K$. To see this, pick a strongly plurisubharmonic exhaustion function $\sigma\colon X\to \R$ such that $L\subset \{\sigma<0\}$ and $\sigma >0$ on the compact set $f(\overline {W_0 \setminus K})$. Given a number $\epsilon>0$, the cited result lets us approximate $f$ uniformly on $A$ by a proper holomorphic map $g\colon W_0\to X$ satisfying $\sigma(g(z)) > \sigma(f(z))-\epsilon$ for all $z\in W_0$. 
Choosing $\epsilon>0$ small enough obtain $g^{-1}(L) \subset \mathring K$ as claimed. The fact that $g$ can be chosen an embedding follows by a general position argument since $2d+1\le n$.

The image $\Sigma:= g(W_0) \subset X$ is a closed complex submanifold of $X$. 
Since $K\cup C$ is $\cO(A)$-convex and $A$ is $\cO(W_0)$-convex, 
$K\cup C$ is also $\cO(W_0)$-convex, and hence the image $g(K\cup C)$ is $\cO(\Sigma)$-convex.
From $g^{-1}(L) \subset \mathring K$ we also get  $L\cap \Sigma \subset g(K) \subset g(K\cup C)$. 
Lemma 6.5 in \cite{Forstneric1999} then shows that the set $L\cup g(K\cup C)$ is also 
$\cO(X)$-convex. Replacing $f$ by $g$ and $W$ by $W_0$ we may assume that $f$ satisfies these properties. 

Set $L'=L\cup f(K)$; then $L'\cap f(C)=\emptyset$ and the sets $L'$, $f(C)$ and $L'\cup f(C)$ are $\cO(X)$-convex. 
Pick a compact set $P\subset X\setminus L'$, containing $f(C)$ in its interior, such that $L' \cup P$ is also $\cO(X)$-convex. 
% We may choose $P$ to be a small analytic polyhedron around $f(C)$.

The hypotheses on the pair $(A,B)$ imply that there exists a holomorphic coordinate system $z=(z_1,\ldots, z_d)\colon V_0\to\C^d$ on a neighborhood $V_0 \subset S$ of $B$ such that, in these coordinates, the compact sets $B$ and $C=A\cap B$ are geometrically convex. Recall that the embedding $f$ is defined on a neighborhood $W$ of $A$. 

Choose open convex neighborhoods $U,V\subset S$ of the sets $C$ and $B$, respectively, such that  $U\subset V\cap W$ and $V\subset V_0$. (More precisely, the sets $z(U) \subset z(V) \subset \C^d$ are assumed to be convex.) We can find an isotopy $r_t\colon V\to V$ of injective holomorphic self-maps, depending smoothly on the parameter $t\in [0,1]$, such that 
\begin{enumerate}
\item $r_0$ is the identity map on $V$, 
\item $r_t(U)\subset U$ for all $t\in [0,1]$, and 
\item $r_1(V)\subset U$. 
\end{enumerate}
In the coordinates $z$ on $V_0$ we can simply choose $r_t$ to be a family of linear contractions towards a point in $U$.

Since the set $U$ is convex, the normal bundle of the embedding $f\colon W\hra X$ 
is holomorphically trivial over $U$ by the Oka-Grauert principle (cf.\ \cite[\S 5.3]{Forstneric2011}). 
Hence, after shrinking $W$ around $A$ and $U$ around $C$ if necessary, there is a holomorphic map $F\colon W\times \D^{n-d} \to X$, where $\D^{n-d}$ denotes the polydisc in $\C^{n-d}$, such that $F$ is injective holomorphic on $U\times \D^{n-d}$ and $F(z,0)=f(z)$ for all $z\in W$. By a further shrinking of the neighborhood $U\supset C$ and rescaling in the fiber variable $w\in \D^{n-d}$ we may also assume that the Stein domain 
\[
	\Omega:=F(U\times \D^{n-d})\subset P\subset X\setminus L'
\]
is Runge in $\mathring P$ and its closure $\overline \Omega$ is $\cO(P)$-convex. 
Since $L'\cup P$ is $\cO(X)$-convex, it follows that $L' \cup \overline{\Omega}$ is also $\cO(X)$-convex. 
Hence there is a Stein neighborhood $\Omega'\subset X$ of $L'$ such that 
$\overline \Omega \cap \overline \Omega'=\emptyset$ and the union 
$\Omega_0:=\Omega\cup\Omega'$ is a Stein Runge domain in $X$. 

Consider the  isotopy of biholomorphic maps $\phi_t\colon V\times \D^{n-d}\to V\times \D^{n-d}$ given by 
\begin{equation}
\label{eq:phit}
	\phi_t(z,w)= (r_t(z),w),\quad z\in V,\ w\in \D^{n-d},\ t\in [0,1].  
\end{equation}
We define a smooth isotopy of injective holomorphic maps $\psi_t\colon \Omega_0 \to X$ ($t\in [0,1]$) by \begin{equation}
\label{eq:conj}
		\psi_t = F\circ \phi_t \circ F^{-1}\quad \text{on}\ \Omega; \qquad \psi_t=\mathrm{Id}\quad \text{on}\ \Omega'.
\end{equation} 
The map $\psi_t$ is defined on $\Omega$ since $r_t(U)\subset U$ for all $t\in [0,1]$. 
Note that $\psi_0$ is the identity on $\Omega_0$ and the domain $\psi_t(\Omega_0)$ is Runge in $X$ for all $t\in [0,1]$. 
By the Anders\'en-Lempert-Forstneri\v c-Rosay theorem \cite[Theorem 4.10.6, p.\ 132]{Forstneric2011} we can approximate the map $\psi_1= F\circ \phi_1 \circ F^{-1}\colon \Omega_0 \to X$ 
uniformly on compacta in $\Omega_0$ by holomorphic automorphisms $\Psi\in \Aut X$. 
Fix such $\Psi$ and consider the injective holomorphic map 
\[
	G = \Psi^{-1}\circ F\circ \phi_1 \colon V\times \D^{n-d}\to X.
\]
Observe that $G$ is indeed defined on $V\times \D^{n-d}$ since we have $\phi_1(z,w)= (r_1(z),w)$ 
and $r_1(V)\subset U$ (see condition (3) above), so $\phi_1(V\times \D^{n-d}) \subset U\times \D^{n-d}$.

Since $\psi_1$ equals the identity map on $\Omega' \supset L'$ by  (\ref{eq:conj}), 
$\Psi$ can be chosen  to approximate the identity as closely as desired 
on a neighborhood of $L'$, so we may assume that $G(V\times \D^{n-d}) \subset X\setminus L'$. 
From the first equation in (\ref{eq:conj}) we see that
\[
	G=\Psi^{-1}\circ \psi_1 \circ F \quad\text{on} \ U\times \D^{n-d}. 
\]
Since $\Psi^{-1} \circ \psi_1$ is close to the identity map on $F(U\times \D^{n-d})$ by the choice of $\Psi_1$, 
$G$ is close to $F$ on $U\times \D^{n-d}$. More precisely, the above argument shows that for any 
given compact subset $M$ of  $U\times \D^{n-d}$ we can choose the automorphism 
$\Psi\in \Aut(X)$ such that the associated map $G$ is as close as desired to $F$ on $M$.

Assuming as we may that the approximation of $F$ by $G$ is close enough, and
after shrinking their domains slightly, we can glue the holomorphic maps 
$F\colon W\times \D^{n-d}\to X$ and $G\colon V\times \D^{n-d}\to X$ 
into a holomorphic map $\wt F\colon (A\cup B)  \times \rho\D^{n-d}\to X$ for some $0<\rho <1$ such that $\wt F$ is close to $F$ on $A\times \rho \D^{n-d}$, and is close to $G$ on $B\times \rho \D^{n-d}$. (We apply the gluing lemma furnished by \cite[Theorem 4.1]{FF:Acta}; see also Theorem 8.7.2 in \cite[p.\ 359]{Forstneric2011}.) 
The holomorphic map $\tilde f:= \wt F(\cdotp,0) \colon A\cup B =D\to X$ then satisfies the conclusion of
Lemma \ref{lem:main}, except that it need not be an injective embedding; this can be achieved by a 
small perturbation since $2d+1\le n$. If all approximations were close enough, 
then the intersection set of $\tilde f$ with the compact set  $L\subset X$ is close to the 
intersection set of $f$ with $L$, so we have $\tilde f^{-1}(L)\subset \mathring K$. 

This proves Lemma \ref{lem:main} when $X$ satisfies the density property.

Assume now that $X$ satisfies the volume density property with respect to a holomorphic volume form $\omega$. Choose a sufficiently small number $\rho>0$ and change the definition of the isotopy $\phi_t\colon V\times \rho\D^{n-d}\to V\times \D^{n-d}$  in (\ref{eq:phit}) by taking
\[
	\phi_t(z,w)=\left(r_t(z),g_t(z,w)\right),\quad z\in V,\ w\in \rho\D^{n-d},
\] 
where the holomorphic map $g_t(z,w)$ is chosen such that $g_0(z,w)=w$, $g_t(z,0)=0$ for all $z\in V$ and $t\in [0,1]$, and $\phi_t$ preserves the volume form $F^*\omega$ on $V\times \D^{n-d}$. (See \cite{AndristWold} for the details.) 
The conjugate isotopy $\psi_t$ given by  (\ref{eq:conj}) then preserves the volume form $\omega$, in the sense that $\psi_t^*\omega=\omega$ for all $t\in[0,1]$. Since the set $V\times \rho \D^{n-d}$
(and hence its image $F(V\times \rho \D^{n-d})$) is contractible, all its cohomology groups vanish. Hence the volume-preserving version of the Anders\'en-Lempert-Forstneri\v c-Rosay theorem applies and shows that $\psi_1$ can be approximated by $\omega$-preserving automorphisms $\Psi$ of $X$ which are close to the identity map on $\Omega'$. 
(A word of explanation is in order. We only need the vanishing of the group $H^{n-1}(\cdotp,\C)$ to approximate the infinitesimal generator of the isotopy on small time intervals by flows of globally defined holomorphic vector fields on $X$ with vanishing $\omega$-divergence; see the proof of Theorem 4.9.2 in \cite[p.\ 125]{Forstneric2011} for the case when $\omega$ is the standard volume form $dz_1\wedge\cdots\wedge dz_n$ on $X=\C^n$. 
Although this cohomology vanishing condition need not hold for the domain $\Omega'$, 
this is irrelevant since we are approximating the constant isotopy $\psi_t=\mathrm{Id}$ there. 
Now apply Proposition 4.10.4 in \cite[p.\ 132]{Forstneric2011} to obtain the 
approximation by $\omega$-preserving automorphisms of $X$. This argument can also
be found in \cite[Theorem 2]{Kaliman-Kut2011}.)  The rest of the proof is exactly as in the 
case of a manifold $X$ with the density property. 
\end{proof}

By using Lemma \ref{lem:main} we shall now obtain the  {\em noncritical case} in the proof of Theorem \ref{th:main}. Let $A\subset A'$ be compact strongly pseudoconvex domains in a Stein manifold $S$. We say that $A'$ is a {\em noncritical strongly pseudoconvex extension} of $A$ (cf.\ \cite[p.\ 218]{Forstneric2011}) if there exist a strongly plurisubharmonic function $\rho$ without critical points on a neighborhood $U$ of $\overline{A'\setminus A}$ and a pair of real numbers $c<c'$ satisfying
\begin{equation}
\label{eq:noncritical}
	U\cap A= \{z\in U\colon \rho(z)\le c\},\quad U\cap A' = \{z\in U\colon \rho(z)\le c'\}.
\end{equation}

\begin{proposition}
\label{prop:noncritical}
Assume that $A\subset A'$ is a noncritical strongly pseudoconvex extension in a Stein manifold $S$. Let $X$ be a Stein manifold with the density property (or the volume density property) satisfying $2\dim S +1\le \dim X$, and let $L\subset X$ be a compact $\cO(X)$-convex set. Given an injective holomorphic immersion $f\colon W \hra X$ on an open set $W \supset A$ such that $f^{-1}(L) \subset \mathring A$, we can approximate $f$ as close as desired uniformly on $A$ by an injective holomorphic immersion $f'\colon W' \hra X$, defined on a small open neighborhood $W' \subset S$ of $A'$, such that $f'(W'\setminus \mathring A)\subset X\setminus L$.
\end{proposition}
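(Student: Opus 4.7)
The plan is to decompose the noncritical extension $A\subset A'$ into a finite chain of small convex bumps and to apply Lemma~\ref{lem:main} at each bump in succession. Since $\rho$ is strongly plurisubharmonic without critical points on a neighborhood $U$ of $\overline{A'\setminus A}$, a standard bump decomposition (see \cite[\S 5.10]{Forstneric2011}) produces a finite chain
\[
	A=A_0\subset A_1\subset\cdots\subset A_N=A'
\]
of compact strongly pseudoconvex domains with $A_i=A_{i-1}\cup B_i$, where each $(A_{i-1},B_i)$ is a special Cartan pair in the sense of Definition~\ref{def:CP} and each $B_i$ is a small convex bump contained in $U$.

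Before starting the induction I would fix a compact $\cO(A)$-convex set $K\subset \mathring A$ with $f^{-1}(L)\subset \mathring K$; this is possible by the hypothesis $f^{-1}(L)\subset \mathring A$, for instance by taking a sublevel set of a strongly plurisubharmonic defining function of $A$. By choosing the bumps $B_i$ sufficiently thin around $\overline{A'\setminus A}$ I may assume $B_i\cap K=\emptyset$ for every $i$, so $K\subset \mathring A_{i-1}\setminus C_i$ with $C_i:=A_{i-1}\cap B_i$. Replacing $S$ by a Stein neighborhood of $A'$ on which each $A_i$ is $\cO(S)$-convex, the convexity of $C_i$ in local coordinates together with the $\cO(A)$-convexity of $K$ and the separation of $K$ from $C_i$ yields the $\cO(A_{i-1})$-convexity of $K\cup C_i$, which is exactly the hypothesis needed in Lemma~\ref{lem:main}(b).

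Set $f_0:=f$. At step $i$ I apply Lemma~\ref{lem:main} to the pair $(A_{i-1},B_i)$, the compact sets $K$ and $L$, and the map $f_{i-1}$, producing an injective holomorphic immersion $f_i$ on a neighborhood of $A_i$ that approximates $f_{i-1}$ uniformly on $A_{i-1}$ and satisfies $f_i^{-1}(L)\subset \mathring K$. The latter is exactly what is needed to re-enter the hypotheses of the lemma at the next step. After $N$ iterations the resulting map $f':=f_N$ is an injective holomorphic immersion on an open neighborhood $W'$ of $A'$ with $(f')^{-1}(L)\subset \mathring K\subset \mathring A$, so $f'(W'\setminus \mathring A)\subset X\setminus L$. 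Cascading the approximation errors, by choosing the tolerance at step $i$ much smaller than at step $i-1$, makes $f'$ approximate $f$ uniformly on $A$ to any prescribed precision.

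Essentially all the analytic difficulty is packaged into Lemma~\ref{lem:main}, so this proposition is primarily a bookkeeping argument. The only care required is in producing the bump decomposition so that each $B_i$ avoids $K$ and in verifying the $\cO(A_{i-1})$-convexity of $K\cup C_i$ at every step, both of which are standard for strongly pseudoconvex domains in Stein manifolds. Injectivity on a neighborhood of $A_i$ at every step is delivered by the lemma through its built-in general-position argument under the dimension hypothesis $2d+1\le n$.
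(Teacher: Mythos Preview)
Your approach is correct and mirrors the paper's proof: fix an $\cO(S)$-convex compact $K\subset\mathring A$ with $f^{-1}(L)\subset\mathring K$, invoke the bump decomposition of \cite[Lemma~5.10.3]{Forstneric2011} to write $A'=A\cup B_1\cup\cdots\cup B_m$ as a chain of special Cartan pairs, and apply Lemma~\ref{lem:main} successively with the same $K$ throughout.

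The one place where your argument is too quick is the verification that $K\cup C_i$ is $\cO(A_{i-1})$-convex. Local convexity of $C_i$, $\cO(A)$-convexity of $K$, and mere disjointness of $K$ and $C_i$ do \emph{not} by themselves force the union to be holomorphically convex (the union of two disjoint $\cO$-convex compacts can fail to be $\cO$-convex). The paper secures this by first covering $\overline{A'\setminus A}$ with coordinate balls $U_j$ chosen small enough that each $K\cup\overline{U_j}$ is $\cO(S)$-convex; the bumps $B_i$ are then taken inside these $U_j$, and since $B_i,C_i$ are convex (hence $\cO(U_{j(i)})$-convex) in those coordinates, the sets $K\cup B_i$ and $K\cup C_i$ are $\cO(S)$-convex, and therefore $\cO(A_{i-1})$-convex. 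With this adjustment your argument is complete and coincides with the paper's.
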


\begin{proof}
Set $d=\dim S$ and $n=\dim X$, so we have $2d+1\le n$. Replacing $S$ by a suitable Stein neighborhood of the compact set $A'$ we may assume that the sets $A$ and $A'$ are $\cO(S)$-convex. Pick a compact set $K\subset \mathring A$ such that $K$ is $\cO(S)$-convex and $f^{-1}(L)\subset \mathring K$. (With $\rho$ as in (\ref{eq:noncritical}) we can simply take $K=\{z\in A\colon \rho(z)\le c_0\}$ for some constant $c_0<c$ close to $c$.) Choose a finite open cover $\{U_1,\dots, U_l\}$ of the compact set $\overline{A'\setminus A}$ such that for each $j=1,\ldots, l$ there is a biholomorphic map $\theta_j\colon U_j\to \B^d \subset \C^d$ onto  the unit ball in $\C^d$ and the set $K\cup \overline U_j$ is $\cO(S)$-convex. The latter condition can be satisfied by choosing the sets $U_j$ small enough.

By Lemma 5.10.3 in \cite[p.\ 218]{Forstneric2011} there exist compact strongly pseudoconvex domains $A=A_0 \subset A_1 \subset \cdots \subset A_m=A'$ such that for every $k=0,1,\ldots,m-1$ we have $A_{k+1}=A_k\cup B_k$, where $B_k$ is a convex bump on $A_k$ and $(A_k,B_k)$ is a special Cartan pair (see Def.\ \ref{def:CP}); furthermore, each set $B_i$ is contained in one of the sets $U_{j(i)}$, and the compact sets $B_i$ and $C_i=A_i\cap B_i$ are geometrically convex with respect to the holomorphic coordinates $\theta_{j(i)}\colon U_{j(i)} \to\B^d$. In particular, $B_i$ and $C_i$ are $\cO(U_{j(i)})$-convex. Since $K\cup \overline U_j$ is $\cO(S)$-convex for each $j=1,\ldots, l$, it follows that $K\cup B_i$ and $K\cup C_i$ are also $\cO(S)$-convex (and hence $\cO(A_{i+1})$-convex) for every $i=0,1,\ldots, m-1$. 

By Lemma \ref{lem:main} we inductively find injective holomorphic immersions $f_i\colon W_i\hra X$ for $i=0,\ldots,m$, with $f_0=f$, such that for every $i=1,\ldots, m$ the map $f_i$ is defined on a small open neighborhood $W_i$ of $A_i$, it approximates the previous map $f_{i-1}$ as closely as desired uniformly on $A_{i-1}$, and it satisfies $f_i^{-1}(L) \subset \mathring K$. All the hypotheses in Lemma \ref{lem:main} are clearly satisfied at each step of the induction. If the aproximation is sufficiently close at every step, then the final map $f'=f_m\colon W_m \to X$, which is defined on a neighborhood $W' = W_m$ of $A'$, satisfies the conclusion of the proposition. 
\end{proof}

%%%%%%%%%%%%%%%%%%%%%%%%%%%%%%%%%%%%%%%%%%%
%
%  THE PROOF
%
%%%%%%%%%%%%%%%%%%%%%%%%%%%%%%%%%%%%%%%%%%%

\section{Proof of Theorems \ref{th:main} and \ref{th:main2}.}
\label{sec:proof}
The proof amounts to an inductive application of Proposition \ref{prop:noncritical}, with an additional argument at critical points of an exhaustion function on the Stein manifold $S$. The procedure is similar to the one used in Oka theory (cf.\ Chapter 5 in \cite{Forstneric2011}), but with additional ingredients to ensure properness of the limit map. 

We shall focus on Theorem \ref{th:main}; it will be clear that the same construction gives the more precise statement in Theorem \ref{th:main2}. We begin with the case when $S'=\emptyset$, that is, without the interpolation condition. 

By general position we may assume that the initial map $f_0=f$ is an injective holomorphic immersion on an open set $U_0\subset S$ containing the given compact set $K$ in Theorem \ref{th:main}. (We may assume that $K$ is nonempty since we can deform $f$ so that it becomes an injective holomorphic immersion on a small open set in $S$.) 

Since $K$ is $\cO(S)$-convex, there is a smooth strongly plurisubharmonic Morse exhaustion function $\rho\colon S\to\R$ such that $\rho<0$ on $K$, $\rho>0$ on $S\setminus U_0$, and $0$ is a regular value of $\rho$. Let $p_1,p_2,\ldots$ be the critical points of $\rho$ in $\{\rho>0\}$, ordered so that $0<\rho(p_1)<\rho(p_2)<\cdots$. Choose an increasing sequence of numbers $c_0=0<c_1<c_2<\cdots$ with $\lim_{j\to\infty} c_j=+\infty$ such that every $c_j$ is a regular value of $\rho$ and we have $c_{2j-1}< \rho(p_j) < c_{2j}$ for $j=1,2,\ldots$. (If there are only finitely many critical points $p_j$ then we choose the rest of the sequence $c_j$ arbitrarily, subject to the condition $\lim_{j\to\infty} c_j=+\infty$.) Furthermore, the numbers $c_{2j-1}$ and $c_{2j}$ are chosen close to $\rho(p_j)$  (this will be specified later). Set 
\[
	K_j=\{z\in S\colon \rho(z) \le c_j\},\qquad j=0,1,2\ldots; 
\]
then $K\subset K_0\subset K_1\subset  \cdots \subset \bigcup_{j=0}^\infty K_j=S$, every $K_j$ is $\cO(S)$-convex, and $K_{j-1}\subset \mathring K_{j}$ holds for all $j=1,2,\ldots$. 

We also pick an exhaustion $L_1\subset L_2\subset \cdots \subset \bigcup_{j=1}^\infty L_j=X$ of $X$ by compact $\cO(X)$-convex sets. For convenience we assume that $L_j=\{\sigma \le j\}$ for a smooth strongly plurisubharmonic Morse exhaustion function $\sigma \colon X\to \R$, chosen such that every integer $j\in \N$ is a regular value of $\sigma$. In the situation of Theorem \ref{th:main2} we can assume that $L_1=L$ is the given $\cO(X)$-convex set such that condition (c) in Theorem \ref{th:main2} holds. 
 
Set $f_0=f$ and pick a number $\epsilon_0>0$. Choose a distance function $\dist$ on $X$ induced by a complete Riemannian metric. We shall find a sequence of continuous maps $f_j\colon S\to X$ and a sequence of numbers $\epsilon_j>0$ such that the following conditions hold for every $j=1,2,\ldots$:
\begin{itemize}
\item[\rm (a)]  $f_j$ is an injective holomorphic immersion on an open neighborhood $W_j$ of $K_j$,  
\item[\rm (b)]  $\sup_{z\in K_{j-1}} \dist (f_j(z),f_{j-1}(z)) <\epsilon_{j-1}$, 
\item[\rm (c)]  $f_j(\overline{K_j\setminus K_{j-1}}) \subset X\setminus L_j$, 
\item[\rm (d)]  there is a homotopy $h_{j,t}\colon S\to X$ $(t\in [0,1])$, with $h_{j,0}=f_{j-1}$ and $h_{j,1}=f_{j}$, such that every map $h_{j,t}$ in the family is holomorphic on a neighborhood of $K_{j-1}$ and we have the estimate  
\begin{equation}
\label{eq:hjt}
	\sup_{z\in K_{j-1}} \dist (h_{j,t}(z),f_{j-1}(z)) <\epsilon_{j-1}, \quad t\in [0,1],
\end{equation}
\item[\rm (e)]  $0<\epsilon_j<\epsilon_{j-1}/2$, and
\item[\rm (f)]  every holomorphic map $F\colon S\to X$ satisfying $\sup_{z\in K_{j}} \dist (F(z),f_j(z)) < 2\epsilon_j$ is an injective immersion on $K_{j-1}$.
\end{itemize}

Let us explain the construction. Assuming that the maps $f_0,\ldots, f_{j-1}$ and the corresponding numbers $\epsilon_0,\ldots,\epsilon_{j-1}$ with the required properties have already been found (this is true for $j=1$), we must explain the contruction of the next map $f_j$ and the choice of the number $\epsilon_j$. There are two distinct cases to consider: the {\em noncritical case} when $K_j\setminus K_{j-1}$ does not contain any critical point of $\rho$ (in our notation this happens for odd values of $j$), and the {\em critical case} when the set $K_{j}\setminus K_{j-1}$ contains a critical point of $\rho$ (this happens for even values of $j$). We shall explain in detail how to get the maps $f_1$ and $f_2$; all the subsequent steps are analogous to one of these two cases.

The initial map $f_0\colon S\to X$ is holomorphic on the open set $U_0\supset K_0$. 
Choose a smoothly bounded strongly pseudoconvex open domain $D_0$ in $S$ with $K_0\subset D_0\Subset U_0$. 
(We may simply take $D_0=\{\rho<c\}$ for a sufficiently small $c>0$.)
By \cite[Theorem 1.1]{DF2010} there is a holomorphic map $g\colon \overline{D}_0\to X$ such that 
\[
	g(bD_0) \subset X\setminus L_1 \quad\text{and}\quad 
	\sup_{z\in K_0} \dist(g(z),f_0(z))< \frac{\epsilon_0}{2}. 
\]
(In the situation of Theorem \ref{th:main2} we can choose $K_0$ and $D_0$ such that $f_0(\overline{D_0\setminus K_0})\subset X\setminus L_1$, so the above condition holds with $g=f_0$.) Furthermore, the map $g$ can be chosen such that there is a homotopy from $f_0$ to $g$, consisting of holomorphic maps on $D_0$ and satisfying the approximation condition in (d) for $j=1$ (that is, on the set $K_0$), with $\epsilon_0$ replaced by $\epsilon_0/2$. (Such a homotopy exists whenever the approximation of $f_0$ by $g$ is sufficiently close on $K_0$.) 
The map $g$, and the homotopy from $f_0$ to $g$, can be extended continuously to all of $S$, without changing them on a small neighborhood of $K_0$, by using a cut-off function in the parameter of the homotopy. 

Since the set $K_1=\{\rho\le c_1\}$ is a noncritical strongly pseudoconvex extension of the set $K_0=\{\rho\le 0\}$, Proposition \ref{prop:noncritical} above furnishes an injective holomorphic immersion $f_1\colon W'_1\to X$ on an open set $W'_1\supset K_1$ in $S$ such that 
\[
	f_1(\overline{K_1\setminus K_0}) \subset X\setminus L_1 \quad \text{and}\quad  
	\sup_{z\in K_0} \dist(f_1(z),g(z))< \frac{\epsilon_0}{2}.
\]
Furthermore, we can ensure that there exists a homotopy of holomorphic maps from $g$ to $f_1$ on a neighborhood of $K_0$ satisfying the estimate (\ref{eq:hjt}) for $j=1$, with $\epsilon_0$ replaced by $\epsilon_0/2$ and $f_0$ replaced by $g$. As before, we extend the map $f_1$ and the homotopy continuously to all of $S$ without changing their values on a smaller neighborhood $W_1$ of the set $K_1$. By combining these two homotopies (the first one from $f_0$ to $g$, and the second one  from $g$ to $f_1$) we get a homotopy $h_{1,t}$ from $f_0$ to $f_1$, consisting of maps that are holomorphic on a neighborhood of $K_0$ and satisfy the estimate (\ref{eq:hjt})  for $j=1$. Clearly the map $f_1$ satisfies properties (a)--(d) for $j=1$. Now pick a number $\epsilon_1$ satisfying $0<\epsilon_1 <\epsilon_0/2$ such that Condition (f) is satisfied for $j=1$ (this holds whenever $\epsilon_1>0$ is sufficiently small). 
This completes the first step of the induction. 

In the next step we must find the map $f_2\colon S\to X$. This {\em critical case} is accomplished in finitely many
substeps which we now describe; for further details we refer to pp.\ 222--223 in \cite[\S 5.11]{Forstneric2011}.  

We may assume that the number $c_1$ is so close to the critical value $\rho(p_1)$ that we can work in a coordinate neighborhood of the point $p_1$ in $S$ in which $\rho$ assumes the quadratic normal form furnished by \cite[Lemma 3.9.1, p.\ 88]{Forstneric2011}. We attach to the strongly pseudoconvex domain $K_1=\{\rho\le c_1\}$ the local stable manifold $E$ of the critical point $p_1$. In the local holomorphic coordinate in which $\rho$ assumes the normal form, $E$ is a linear totally real disc of dimension $k$ that equals the Morse index of $p$ (indeed, $E$ is a closed ball in $\R^k\subset \R^d\subset \C^d$), it is contained in $\{\rho > c_1\}=X\setminus K_1$ except for $bE$, and is attached to $K_1$ along a legendrian  (complex tangential) sphere $S^{k-1}\cong bE$ contained in the strongly pseudoconvex hypersurface $bK_1= \{\rho=c_1\}$.  

Choose a number $c'_1$ with $c_1<c'_1<\rho(p_1)$ and  $c'_1$ very close to $\rho(p_1)$. (The latter condition will be made precise in the sequel.) By the noncritical case explained above, we can assume that $f_1$ is holomorphic on a neighborhood of the set $D_1:=\{\rho\le c'_1\}$ and it satisfies $f_1(\overline{D_1 \setminus K_0}) \subset X\setminus L_1$. We may assume that the properties (a)--(f) still hold for the new $f_1$. Let $E'=E\cap \{\rho\ge c'_1\}$; this is again a totally real $k$-disc attached with its boundary sphere $bE'\cong S^{k-1}$ to the domain $D_1$.

Consider the continuous map $f_1|_{E'} \colon E'\to X$. Note that $f_1(bE')\subset X\setminus L_1$. We claim that $f_1|_{E'}$ can be homotopically deformed to a map with values in $X\setminus L_1$, keeping the homotopy fixed near $bE'$. To see this, observe that $X$ is obtained from $X\setminus \mathring L_1$  by attaching to the latter set handles of index at least $n$. (This is because the critical points of the function $-\sigma \colon X\to \R$ have Morse indices at least $n$, and $X\setminus \mathring L_1=\{-\sigma \le -1\}$.) It follows that the relative homotopy groups $\pi_l(X,X\setminus L_1)$ vanish for $l=0,\ldots,n-1$. Since $k\le d<n$, we see that any map $(E',bE')\mapsto (X,X\setminus L_1)$ is homotopic to a map with values in $X\setminus L_1$, so the claim follows. We can extend this homotopy to all of $X$ without changing it on $D_1$. We still denote the new map by $f_1$. By the construction we see that $f_1$ maps the compact set $(D_1\cup E')\setminus \mathring K_0$ to $X\setminus L_1$.  

We now apply the Mergelyan's theorem in \cite[Theorem 3.7.2, p.\ 81]{Forstneric2011} to make $f_1$ holomorphic on a neighborhood $\Omega\subset S$ of the set $D'_1\cup E'$. To simplify the notation we still denote the new map by $f_1$ and assume that properties (a)--(f) hold for $j=1$.   

Assuming that the number $c'_1$ is close enough to $\rho(p_1)$, Lemma 3.10.1 in \cite[p.\ 92]{Forstneric2011} gives a smooth strongly plurisubharmonic function $\tau$ on $S$ and constants $0<c<c'$ satisfying the following properties: 
\begin{itemize}
\item $K_1\cup E \subset \{\tau <c\} \subset \Omega$,  
\item $K_2=\{\rho\le c_2\} \subset \{\tau <c'\}$, and 
\item $\tau$ has no critical values in the interval $[c,c']$.
\end{itemize}

Applying the noncritical case explained above with $\tau$ and $f_1$, 
we find a map $f_2$ and a homotopy from $f_1$ to $f_2$ satisfying properties (a)--(d) for $j=2$. 
Pick a number $\epsilon_2>0$ so that conditions (e) and (f) hold for $j=2$. This completes the construction of $f_2$.

The subsequent steps in the induction are exactly the same as in one of these two cases, depending on the parity of the index $j$, so the induction proceeds. 

\smallskip
\noindent {\em Conclusion of the proof of Theorem \ref{th:main} (assuming $S'=\emptyset$):} Conditions (a) and (e) ensure that the sequence $f_j$ converges uniformly on compacta in $S$ to a holomorphic map $F=\lim_{j\to\infty} f_j\colon S\to X$. Condition (d) shows that the sequence of homotopies $h_{j,t}$ also converges uniformly on compacta to a homotopy $h_t\colon S\to X$ $(t\in [0,1])$ from $h_0=f_0=f$ to $h_1=F$. Properties (b) and (e) imply  
\begin{equation}
\label{eq:Ffj}
	\sup_{z\in K_j} \dist(F(z),f_j(z)) < 2\epsilon_j\quad \text{for}\ j=0,1,2,\ldots. 
\end{equation}
In particular, we have $\sup_{z\in K} \dist(F(z),f(z)) < 2\epsilon_0$. The estimate (\ref{eq:Ffj}), together with property (f) of the sequence $\epsilon_j$, shows that $F\colon S\to X$ is an injective holomorphic immersion on $K_{j-1}$. Since this holds for every $j$, it follows that $F$ is an injective immersion on all of $S$. Finally, condition (c) together with (\ref{eq:Ffj}) shows that $F$ is proper.

This completes the proof of Theorems \ref{th:main} and \ref{th:main2} in the case $S'=\emptyset$.

\smallskip
\noindent {\em The general case:} The restricted map $f_{S'}\colon S'\to X$ is by the assumption a proper holomorphic embedding. The construction of a proper holomorphic embedding $F\colon S\hookrightarrow X$ satisfying 
$F|_{S'}=f|_{S'}$ requires a minor modification of the induction scheme. One can proceed as in \cite{ForstnericRitter} (see in particular \S 4 in the cited paper, Case 2, pp.\ 11--12 in the arXiv preprint), except that the individual steps in the induction are accomplished as described in this paper. 
An important point is that none of the bumps which are used in this construction intersect the subvariety $S'$, 
and hence we can perform the gluing at every step so as to satisfy the required interpolation condition on $S'$.

\begin{remark}
\label{rem:anotherapproach} 
Assuming that $X$ has the density property, one can also prove 
Theorem \ref{th:main2} by following the approach in \cite{ForstnericRitter}.
The main difference with respect to this paper lies in the proof of Lemma \ref{lem:main}.
When attaching a bump $B$ to a set $A$, with an embedding $f:A\to X$, we may assume 
by \cite[Theorem 1.1]{DF2010} that $f$ is proper holomorphic on some neighborhood $W\subset S$ of $A$, 
so  $f(C)$ is an embedded holomorphically contractible set such that $L\cup f(C)$ is 
holomorphically convex in $X$.  (Here $C=A\cap B$.) Thus we can pick a point $x\in f(C)$ and construct a 
sequence $\phi_j$ of holomorphic automorphisms of $X$ such that $f(C)$ is contained in the basin of attraction
$\Omega\subset X$ of  $(\phi_j)_{j\in\N}$, but the set $L$ does not intersect $\Omega$.
Assuming as we may that the sequence $\phi_j$ satisfies the uniform attraction condition 
$a|z|\leq |\phi_j(z)|\leq b|z|$ with $0<b^2<a<b< 1$ on a ball centered at the point $x$
(in some local holomorphic coordinates $z$ around $x$ with $z(x)=0$), 
the basin $\Omega$ is biholomorphic to $\C^n$. (See \cite[Theorem 9.1]{RR1988} for the
special case of iteration of an automorphism, and \cite{Wold2005} for the general case
of a random iteration.)  Hence one can approximate and glue maps just as was done  in \cite{ForstnericRitter}. 
One can find a sequence $\phi_j$ with these properties using the 
Anders\'en-Lempert-Forstneri\v c-Rosay theorem. This approach does not seem to work
by using the volume density property of $X$. 
\end{remark}

\smallskip
\textit{Acknowledgements.}
F.\ Forstneri\v c is supported by research program P1-0291 and research grant J1-5432 from ARRS, Republic of Slovenia. T.\ Ritter is supported by Australian Research Council grant DP120104110. E.\ F. Wold is supported by grant  NFR-209751/F20 from the Norwegian Research Council.

\bibliographystyle{amsplain}

\end{document}